\documentclass[10pt,twoside, reqno]{amsart}
\usepackage[matrix,arrow]{xy}
\usepackage{amssymb}
\usepackage{amsmath}
\usepackage{amsfonts}
\usepackage{epsfig}
\usepackage{color}
\usepackage{epstopdf}
\usepackage{graphicx}
\usepackage{amsthm}
\usepackage{enumerate}
\usepackage[mathscr]{eucal}
\usepackage{verbatim}
\usepackage{bm}

\numberwithin{equation}{section}

\usepackage[bookmarks=true,hyperindex,pdftex,colorlinks,citecolor=blue,
linkcolor=blue,urlcolor=blue]{hyperref}
\usepackage{tikz}
\usetikzlibrary{matrix,arrows.meta}

\newcommand{\vertiii}[1]{{\left\vert\kern-0.25ex\left\vert\kern-0.25ex\left\vert #1 
		\right\vert\kern-0.25ex\right\vert\kern-0.25ex\right\vert}}

\theoremstyle{plain}
\newtheorem{theorem}{Theorem}[section]

\theoremstyle{definition}

\newtheorem{remark}[theorem]{Remark}

\begin{document}
	\title{Weak Minimizing Property and  the Compact Perturbation Property for the Minimum Modulus} 
	
\author[A. Raposo Jr.]{Anselmo Raposo Jr.}
\address[Anselmo Raposo Jr.]{Departamento de Matem\'{a}tica \newline\indent
	Universidade Federal do Maranh\~{a}o \newline\indent
	65085-580 - S\~{a}o Lu\'is, Brazil.}
\email{\href{mailto:anselmo.junior@ufma.br}{anselmo.junior@ufma.br}}

\author[G. Ribeiro]{Geivison Ribeiro}
\address[Geivison Ribeiro]{Departamento de Matem\'{a}tica \newline\indent
	Universidade Federal do Maranh\~{a}o \newline\indent
	65085-580 - S\~{a}o Lu\'is, Brazil.}
\email{\href{mailto:geivison.ribeiro@ufma.br}{geivison.ribeiro@ufma.br} \textrm{and} \href{mailto:geivison.ribeiro@academico.ufpb.br} {geivison.ribeiro@academico.ufpb.br}}

\keywords{Banach space, reflexive Banach space, min-attaining operator, compact pertubation}
\subjclass[2020]{ 46B20} 
	
	\begin{abstract}
		For an operator $T:X\to Y$, denote $m(T)=\inf\{\|Tx\|:x\in S_X\}$. 
		A sequence $(x_n)$ in $S_X$ is said to be minimizing for $T$ if $\|Tx_n\|\to m(T)$. 
		The weak minimizing property (WmP), introduced by Chakraborty, requires that every operator admitting a non-weakly null minimizing sequence attains its minimum modulus.
		
		More recently, Han~\cite{Han2026} introduced the Compact Perturbation Property for the minimum modulus (CPPm), which requires that for every operator $T:X\to Y$ that does not attain its minimum modulus,
		\[
		\sup_{K\in\mathcal{K}(X,Y)} m(T+K)=m(T).
		\]
		
		In~\cite{Han2026}, it is shown that $(\ell_1,\ell_1)$ fails both properties, while $(c_0,c_0)$ fails the WmP. However, whether $(c_0,c_0)$ has the CPPm was left open (Problem~3.6).
		
		In this paper, we give a negative answer to this question by proving that $(c_0,c_0)$ does not have the CPPm. The proof is constructive, exhibiting a non-min-attaining operator whose minimum modulus is strictly increased by a rank-one compact perturbation.
		
		Moreover, we show that this phenomenon is not specific to $c_0$: if $X=\mathbb{K}\oplus_\infty Y$ with $Y$ non-reflexive, then the pair $(X,X)$ fails the CPPm. 
	\end{abstract}
	
	\maketitle
	
\section{Introduction}

For Banach spaces $X$ and $Y$, denote by $L(X,Y)$ the space of all bounded linear operators $T:X\to Y$. For each $T\in L(X,Y)$, consider the function
\[
\phi_T(x)=\|Tx\|,\qquad x\in S_X,
\]
defined on the unit sphere $S_X$ of $X$. While $\sup_{x\in S_X}\phi_T(x)=\|T\|$, the corresponding infimum
\begin{equation}\label{m}
	m(T)=\inf\{\|Tx\|:x\in S_X\}
\end{equation}
is known as the \emph{minimum modulus} of $T$.

The study of extremal properties of $\phi_T$ has a long tradition in functional analysis. Norm-attaining operators have been extensively investigated since the classical Bishop--Phelps theorem \cite{bis-phe}, and their density and structural properties have been studied by many authors, see for instance \cite{Ac2006,Diestel}. In contrast, the behavior of operators attaining their minimum modulus is less understood, partly because every non-injective operator trivially attains its minimum.

To overcome this difficulty, Chakraborty \cite{Cha2020} introduced the \emph{weak minimizing property} (WmP), which ensures the existence of minimizers under a weak compactness-type condition on minimizing sequences. This notion is a counterpart of the weak maximizing property (WMP) studied in \cite{AGPT}. A systematic study of WmP was carried out in \cite{Cha2020,Han2026, Ribeiro_Kadets}, where pairs of Banach spaces were analyzed.

In particular, it follows from general reflexivity considerations (see, e.g., \cite{AGPT} and related results) that $(c_0,c_0)$ fails the WmP. This raises the question of whether weaker stability properties for the minimum modulus may still hold in such spaces.

Motivated by this problem, Han \cite{Han2026} introduced the \emph{Compact Perturbation Property for the minimum modulus} (CPPm). A pair $(X,Y)$ is said to have the CPPm if for every operator $T\in L(X,Y)$ that does not attain its minimum modulus, one has
\begin{equation}\label{cppm}
	\sup_{K\in \mathcal{K}(X,Y)} m(T+K)=m(T),
\end{equation}
where $\mathcal{K}(X,Y)$ denotes the space of compact operators.

The CPPm can be viewed as a perturbative stability property for the minimum modulus. In \cite{Han2026}, it is shown that $(\ell_1,\ell_1)$ fails both the WmP and the CPPm, while the case of $(c_0,c_0)$ remains unresolved. More precisely, Han posed the following question (see \cite[Question~3.6]{Han2026}):

\medskip

\begin{quote}
	\textbf{Question.} Does the pair $(c_0,c_0)$ have the CPPm?
\end{quote}

\medskip

The main purpose of this paper is to answer this question in the negative.

\medskip

\begin{theorem}\label{main}
	The pair $(c_0,c_0)$ does not have the CPPm.
\end{theorem}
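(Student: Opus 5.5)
The plan is to realize $c_0$ isometrically as $\mathbb{K}\oplus_\infty c_0$ via $(t,y)\mapsto (t,y_1,y_2,\dots)$. On this decomposition I would write down an explicit operator $T$ that does not attain its minimum modulus and becomes the identity after a rank-one perturbation. Since the CPPm is a condition on pairs up to isometry, it suffices to work on $X=\mathbb{K}\oplus_\infty Y$ with $Y=c_0$. The only property of $Y$ I will use is that there is a functional $f\in S_{Y^*}$ that does not attain its norm. This holds for any non-reflexive $Y$ by James' theorem, and for $c_0$ one can take explicitly $f=(2^{-n})_n\in\ell_1=c_0^*$.

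\textbf{Construction.} Define
\[
T(t,y)=(t-f(y),\,y),\qquad K(t,y)=(f(y),\,0).
\]
Both are bounded, with $\|T\|\le 2$. The operator $K$ has rank one, so it is compact, and $T+K=\mathrm{Id}_X$. Hence $m(T+K)=1$, and it remains to show that $m(T)=1/2$ and that this value is not attained. Together these give $\sup_{K}m(T+K)\ge 1>m(T)$ for a non-min-attaining $T$, which is exactly a failure of the CPPm.

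\textbf{Computing $m(T)$.} Take $(t,y)\in S_X$, so $\max(|t|,\|y\|)=1$, and note that $\|T(t,y)\|=\max(|t-f(y)|,\|y\|)$.
\begin{itemize}
\item If $\|y\|=1$, the value is at least $1$.
\item Otherwise $|t|=1$ and $r:=\|y\|<1$. Since $f$ does not attain its norm, $|f(y)|<r$ whenever $y\neq 0$, and of course $f(0)=0$.
\begin{itemize}
\item If $y\ne 0$, we get $|t-f(y)|\ge 1-|f(y)|>1-r$, so $\|T(t,y)\|>\max(1-r,r)\ge 1/2$.
\item If $y=0$, then $\|T(t,y)\|=|t|=1$.
\end{itemize}
\end{itemize}
So the value $1/2$ is never attained. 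For the infimum, fix $t=1$ and choose $y_n$ with $\|y_n\|=1/2$ and $f(y_n)\to 1/2$, which is possible because $\|f\|=1$. Then $|1-f(y_n)|\to 1/2$, and therefore $\|T(1,y_n)\|\to 1/2$. This gives $m(T)=1/2$, not attained.

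\textbf{Main obstacle.} The delicate point is the strictness in the non-attainment step. It rests entirely on $|f(y)|<\|y\|$ for every $y\ne 0$. I would verify this carefully for $f=(2^{-n})$ on $c_0$ over both $\mathbb{R}$ and $\mathbb{C}$. It follows because a vector in $c_0$ has $|y_n|<\|y\|$ for all large $n$, so $|\sum_n 2^{-n}y_n|<\|y\|$. The same argument, with James' theorem supplying $f$, yields the general statement for $X=\mathbb{K}\oplus_\infty Y$ with $Y$ non-reflexive.
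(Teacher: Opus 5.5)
Your argument is correct and is essentially the paper's. Under $c_0\cong\mathbb{K}\oplus_\infty c_0$, your $f=(2^{-n})_n$, $T$ and $K$ are exactly the paper's $f(x)=\sum_{j\ge2}2^{1-j}x_j$, $Tx=x-f(x)e_1$ and $Kx=f(x)e_1$, and your James'-theorem version is the paper's general result for $\mathbb{K}\oplus_\infty Y$ with $Y$ non-reflexive; one small slip is that $\|T(t,y)\|>\max(1-r,r)$ need not hold strictly when $r>\tfrac12$, but the bound you actually need, $\|T(t,y)\|>\tfrac12$, does hold, since either $r>\tfrac12$ or $|t-f(y)|>1-r\ge\tfrac12$.
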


\medskip

Our proof is constructive: we exhibit an operator $T:c_0\to c_0$ that does not attain its minimum modulus and a rank-one compact operator $K$ such that $m(T+K)>m(T)$.

\medskip

Moreover, we show that this phenomenon is not specific to $c_0$, but reflects a general structural obstruction. Namely, if $X=\mathbb{K}\oplus_\infty Y$ with $Y$ non-reflexive, then $(X,X)$ fails the CPPm.

\medskip

\section{The results}\label{sec:mainproof}

In this section we prove the main result of the paper and its structural extension.

\medskip

\begin{proof}[Proof of Theorem~\ref{main}]
	Let $f\in (c_0)^*=\ell_1$ be a functional such that $\|f\|=1$, $f(e_1)=0$, and whose support is infinite. For instance, one may take
	\[
	f(x)=\sum_{j=2}^\infty 2^{1-j}x_j, \qquad x\in c_0.
	\]
	Define $T:c_0\to c_0$ by
	\begin{equation}\label{defT}
		Tx=x-f(x)e_1.
	\end{equation}
	
	Let $x\in S_{c_0}$ and set $s=\sup_{j\ge 2}|x_j|$. Since $f$ depends only on the coordinates $j\ge 2$, we have $|f(x)|\le s$, and therefore
	\[
	\|Tx\|=\max\{|x_1-f(x)|,s\}.
	\]
	If $s\ge \frac12$, then $\|Tx\|\ge s\ge \frac12$. If $s<\frac12$, then $|x_1|=1$, and hence
	\[
	|x_1-f(x)|\ge |x_1|-|f(x)|\ge 1-s>\frac12.
	\]
	Thus $\|Tx\|\ge \frac12$ for every $x\in S_{c_0}$, so $m(T)\ge \frac12$.
	
	For the reverse inequality, consider
	\[
	x^{(N)}=e_1+\frac12\sum_{j=2}^N e_j.
	\]
	Then $\|x^{(N)}\|=1$ and
	\[
	f(x^{(N)})=\frac12\sum_{j=2}^N2^{1-j}\longrightarrow \frac12.
	\]
	Consequently,
	\[
	\|Tx^{(N)}\|
	=
	\max\left\{|1-f(x^{(N)})|,\frac12\right\}
	\longrightarrow \frac12.
	\]
	Hence
	\[
	m(T)=\frac12.
	\]
	
	We claim that $T$ does not attain its minimum modulus. Suppose, towards a contradiction, that there exists $x\in S_{c_0}$ such that $\|Tx\|=\frac12$. Then $s\le \frac12$ and, since $\|x\|=1$, we have $|x_1|=1$. Multiplying $x$ by a scalar of modulus one, if necessary, we may assume that $x_1=1$. Since $T$ is linear, this does not change the value of $\|Tx\|$.
	
	Thus
	\[
	|1-f(x)|\le \frac12,
	\]
	and therefore
	\[
	|f(x)|\ge 1-|1-f(x)|\ge \frac12.
	\]
	On the other hand,
	\[
	|f(x)|
	\le \sum_{j=2}^{\infty}2^{1-j}|x_j|
	\le \frac12\sum_{j=2}^{\infty}2^{1-j}
	=
	\frac12.
	\]
	Hence $|f(x)|=\frac12$. Equality in the above estimate forces
	\[
	|x_j|=\frac12
	\qquad (j\ge 2).
	\]
	This contradicts $x\in c_0$. Therefore, $T$ does not attain its minimum modulus.
	
	Finally, define $K:c_0\to c_0$ by
	\[
	Kx=f(x)e_1.
	\]
	Then $K$ is compact (has rank one), and
	\[
	(T+K)x=x \qquad (x\in c_0).
	\]
	Thus $T+K=I_{c_0}$, and consequently
	\[
	m(T+K)=1>\frac12=m(T).
	\]
	This shows that the pair$(c_0,c_0)$ does not have the CPPm.
\end{proof}

\medskip

We now show that the above construction is not specific to $c_0$.

\medskip

\begin{theorem}\label{general}
	Let $X=\mathbb{K}\oplus_\infty Y$, where $Y$ is a non-reflexive Banach space. Then the pair $(X,X)$ does not have the CPPm.
\end{theorem}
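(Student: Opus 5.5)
Mimic the $c_0$ construction. Since $Y$ is non-reflexive, James' theorem gives a functional $g\in Y^*$ with $\|g\|=1$ that does not attain its norm on $B_Y$. Write elements of $X$ as $(\lambda,y)$ with $\|(\lambda,y)\|=\max\{|\lambda|,\|y\|\}$. Define
\[
T(\lambda,y)=\bigl(\lambda-\tfrac12 g(y),\,y\bigr),\qquad K(\lambda,y)=\bigl(\tfrac12 g(y),0\bigr).
\]
Then $K$ has rank one, hence is compact, and $T+K=I_X$, so $m(T+K)=1$. It remains to show $m(T)=\frac12$ and that $T$ does not attain it, exactly as in the proof of Theorem~\ref{main}, with $f$ replaced by $\frac12 g$.

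\textbf{Lower bound.} Take $(\lambda,y)\in S_X$ and put $s=\|y\|$. Then $\|T(\lambda,y)\|=\max\{|\lambda-\frac12 g(y)|,s\}$. If $s\ge\frac12$, we are done. If $s<\frac12$, then $|\lambda|=1$, so $|\lambda-\frac12 g(y)|\ge 1-\frac12 s>\frac12$.

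\textbf{Upper bound.} Choose $y_n\in S_Y$ with $g(y_n)\to1$, and set $x_n=(1,\frac12 y_n)$. Then $\|x_n\|=1$ and
\[
\|Tx_n\|=\max\bigl\{|1-\tfrac14 g(y_n)|,\tfrac12\bigr\}\to\max\{\tfrac34,\tfrac12\}=\tfrac34.
\]
This does not give $\frac12$, so the coefficient must be recalibrated. With $T(\lambda,y)=(\lambda-cg(y),y)$ and test vectors $x=(1,ty_n)$, $t\le1$, the norm tends to $\max\{1-ct,t\}$. The optimal choice is $t=\frac1{1+c}$, giving $m(T)\le\frac1{1+c}$. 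Taking $c=1$ gives the candidate value $\frac12$.

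So redo everything with $c=1$: $T(\lambda,y)=(\lambda-g(y),y)$ and $K(\lambda,y)=(g(y),0)$.
\begin{itemize}
\item \emph{Lower bound.} If $s\ge\frac12$, then $\|T(\lambda,y)\|\ge s\ge\frac12$. If $s<\frac12$, then $|\lambda|=1$ and $|\lambda-g(y)|\ge1-s>\frac12$.
\item \emph{Upper bound.} The vectors $x_n=(1,\frac12 y_n)$ satisfy $\|Tx_n\|=\max\{|1-\frac12 g(y_n)|,\frac12\}\to\frac12$.
\end{itemize}
Hence $m(T)=\frac12$, and $m(T+K)=m(I_X)=1>\frac12$.

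\textbf{Non-attainment.} Suppose $\|T(\lambda,y)\|=\frac12$ for some $(\lambda,y)\in S_X$. Then $\|y\|\le\frac12$, so $|\lambda|=1$; after rotating we may assume $\lambda=1$. From $|1-g(y)|\le\frac12$ we get $|g(y)|\ge\frac12\ge\|y\|$, so $\|y\|=\frac12$ and $|g(y)|=\|y\|$. Then $2y$ is a unit vector at which $|g|$ attains its norm, and multiplying by a unimodular scalar gives a point where $g$ attains its norm. This contradicts the choice of $g$ via James' theorem.

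The key step, and the only non-routine one, is the appeal to James' theorem. It replaces the "infinite support" trick used for $c_0$, and it is precisely where the non-reflexivity of $Y$ enters the argument.
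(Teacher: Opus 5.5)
Your final argument (with $c=1$) is correct and is essentially the paper's proof: the same operator $T(\lambda,y)=(\lambda-g(y),y)$, the same rank-one $K$ with $T+K=I_X$, the same test vectors $(1,\tfrac12 y_n)$, and the same appeal to James' theorem for non-attainment. In a write-up, delete the abandoned $c=\tfrac12$ attempt, and note that in the complex case the $y_n$ must first be rotated by unimodular scalars to get $g(y_n)\to 1$.
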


\begin{proof}
	Since $Y$ is non-reflexive, James' theorem gives a functional $f\in S_{Y^*}$ that does not attain its norm. Define $T:X\to X$ by
	\[
	T(a,y)=(a-f(y),y).
	\]
	
	Let $(a,y)\in S_X$ and put $s=\|y\|$. Then
	\[
	\|T(a,y)\|=\max\{|a-f(y)|,\|y\|\}.
	\]
	If $s\ge \frac12$, then $\|T(a,y)\|\ge \frac12$. If $s<\frac12$, then $|a|=1$, and therefore
	\[
	|a-f(y)|\ge |a|-|f(y)|\ge 1-\|y\|>\frac12.
	\]
	Thus $m(T)\ge \frac12$.
	
	Since $\|f\|=1$, there exists $(y_n)\subset S_Y$ such that $|f(y_n)|\to 1$. Multiplying by suitable scalars of modulus one, we may assume that $f(y_n)\to 1$. For
	\[
	x_n=\left(1,\frac12 y_n\right)
	\]
	we have $\|x_n\|=1$ and
	\[
	\|Tx_n\|
	=
	\max\left\{\left|1-\frac12 f(y_n)\right|,\frac12\right\}
	\longrightarrow \frac12.
	\]
	Hence $m(T)=\frac12$.
	
	If $T$ attained its minimum modulus at some $(a,y)\in S_X$, then
	\[
	\max\{|a-f(y)|,\|y\|\}=\frac12.
	\]
	Thus $\|y\|\le \frac12$ and $|a|=1$. Also,
	\[
	|f(y)|\ge |a|-|a-f(y)|\ge \frac12.
	\]
Furthermore
	\[
	|f(y)|\le \|y\|\le \frac12.
	\]
	Hence $|f(y)|=\|y\|=1/2$, and therefore
	\[
	\left|f\left(\frac{y}{\|y\|}\right)\right|=1,
	\]
	contradicting the fact that $f$ does not attain its norm. Thus $T$ does not attain its minimum modulus.
	
	Finally, define $K:X\to X$ by
	\[
	K(a,y)=(f(y),0).
	\]
	Then $K$ has rank one and
	\[
	(T+K)(a,y)=(a,y).
	\]
	Hence $T+K=I_X$, so
	\[
	m(T+K)=1>\frac12=m(T).
	\]
	Therefore the pair $(X,X)$ does not have the CPPm.
\end{proof}

\begin{remark}
	The case $X=c_0$ also follows from Theorem~\ref{general}, since
	$c_0$ is isometric to $\mathbb K\oplus_\infty c_0$.
\end{remark}

\section*{Acknowledgments}

The authors thank V.~Kadets for suggesting the idea leading to the generalization presented in this paper. The author A.~Raposo. was partially supported by the projects CNPq 406457/2023-9 and 302341/2025-0.

\end{document}